\newtheorem{theorem}{Theorem}[section]
\newtheorem{lemma}[theorem]{Lemma}
\theoremstyle{definition}
\newtheorem{definition}[theorem]{Definition}
\newtheorem{example}[theorem]{Example}
\newtheorem{cor}[theorem]{Corollary}
\newtheorem{prop}[theorem]{Proposition}
\theoremstyle{remark}
\numberwithin{equation}{section}
\newcommand{\ZS}{\mathcal{S}}
\newcommand{\ZE}{\mathcal{E}l}
\begin{document}

\title[Continuous Sensitivity and Reversibility]{\bf Continuous Sensitivity and Reversibility}
\author{Asl\i \ G\"{u}\c{c}l\"{u}kan \.{I}lhan and \"{O}zg\"{u}n \"{U}nl\"{u}}

\address{Asli G\" u\c cl\" ukan \.{I}lhan\\
Department of Mathematics \\
Dokuz Eyl\"{u}l University\\
Buca, \.Izmir, Turkey.}

\email{asli.ilhan@deu.edu.tr}

\address{\"{O}zg\"{u}n \"{U}nl\"{u} \\
Department of Mathematics \\
Bilkent University \\
Bilkent, Ankara, Turkey.}

\email{unluo@fen.bilkent.edu.tr}

\thanks{The second author is partially supported by T\"{U}BA-GEBİP/2013-22.}

\keywords{Reversibility, global inverse function theorem}

\begin{abstract}

Let $n$ be a positive integer and $f$ a differentiable function from a convex subset $C$ of the Euclidean space $\mathbb{R}^n$ to a smooth manifold. We define an invariant of $f$ via counting certain threshold functions associated to $f$. We  call this invariant the continuous sensitivity of $f$ and denote it by $\mathrm{cs}_{C}(f)$. This invariant is a real number between $0$ and $n$ and measures how sensitive $f$ is to change in its input variables. For example, if $f$ is a constant function then $\mathrm{cs}_{C}(f)=0$. On the other extreme, if $\mathrm{cs}_{C}(f)=n$ then $f$ is one-to-one on $C$.  This last statement is important for reversibility problems. To say that a function is reversible one can write an explicit inverse of the function. However, this is not always easy. Even a multilinear function can have a complicated inverse function. Here we give tools to compute continuous sensitivity which makes it possible to answer reversibility problems without finding explicit inverse functions.
\end{abstract}

\maketitle

\tableofcontents

\section{Introduction}

In this paper we define (Definition \ref{definition:contsens}) and study the continuous sensitivity of a differentiable function  from a convex subset of an Euclidean space to a smooth manifold. The main reason behind the definition of continuous sensitivity is that it is a useful  invariant which helps us to answer (Corollary \ref{cor:revFirst}) reversibility problems without finding an explicit inverse of the function. More precisely we show that such a function is one-to-one when its continuous sensitivity is equal to the dimension of the Euclidean space in which its domain lives. We also give several tools (Theorem \ref{thm:counting-intersctions}, Proposition \ref{prop:counting-set}) for computing continuous sensitivity of certain functions using only a finite amount of information. The rest of this introduction is to motivate the reader about continuous sensitivity and reversibility problems by giving directions for applications of these concepts in areas like algebraic topology and computer science.

Many topological spaces of interest can be constructed by using simplices as building blocks like simplicial complexes, realizations of categories, and manifolds with triangulations. Hence continuous sensitivity can be used to detect homeomorphisms between such topological spaces. For example, topological realization of a category enriched over simplicial sets is constructed by gluing products of simplicies like $ \Delta ^{n_1-1}\times \Delta ^{n_2-1}\times \dots \times \Delta ^{n_k-1}$ and the topological realization of a category enriched over sets is constructed by gluing simplicies like $\Delta^{m-1}$ to each other. Therefore to compare the topological realization of a category enriched over simplicial sets with the topological realization of a category enriched over sets, one has to study multilinear functions in the following form
$$\overline{\phi }: \Delta ^{n_1-1}\times \Delta ^{n_2-1}\times \dots \times \Delta ^{n_k-1}\rightarrow \Delta^{m-1}. $$
Here the domain of $\overline{\phi }$ is a convex subspace of the Euclidean space $\mathbb{R}^n$ where $n=n_1+n_2+\dots +n_k$ and the codomain of $\overline{\phi }$ is a subspace of $\mathbb{R}^m$. Moreover, these multilinear functions are differentiable. Hence, one can determine the reversibility of $\overline{\phi }$  by computing its continuous sensitivity. 

Let $\mathcal{C}$ be a category which contains a pair of composable non-identity morphisms. In the geometric realization of $\mathcal{C}$, there is an associated $2$-simplex $\Delta ^2$ for every such pair of morphisms. Let $\mathcal{D}$ be a category enriched over simplicial sets with morphisms $f$, $g$, and $h$ such that $h$ and $g\circ f$ are homotopic in  $\mathcal{D}$.  Then  in the realization of  $\mathcal{D}$,  there exist associated $2$-simplexes and simplicial sets of the form $\Delta ^1 \times \Delta ^1 $ glued to each other as discussed in Example \ref{ex:cat}. In this example, we use the techniques discussed above to  write  a homeomorphism between these parts of realizations of $\mathcal{C}$ and $\mathcal{D}$.

Other applications can be found in computer science. Multi-valued logic is a propositional calculus where the logical operations have input variables and an output variable with possibly different sets of truth values. More precisely a multi-valued logic gate $\phi $ is a function from a product of sets $T_1\times T_2\times \dots \times T_k$ to a set $T$ where the set $T_i$ is the set of possible truth values of the $i^{\text{th}}$ input variable and the set $T$ is the set of possible truth values of the output. In nature, most of the time logic gates communicate with each other using analogue signals. 
Hence it is natural to consider $T=\{v_1,v_2,\dots v_n \}$ where $v_1$, $v_2$, $\dots$, $v_n$ are vectors in $\mathbb{R}^m$ for some $m\geq 1$. In this case, the Fourier-expansion of the multi-valued logic gate is the multilinear function
$$\overline{\phi }: \Delta ^{n_1-1}\times \Delta ^{n_2-1}\times \dots \times \Delta ^{n_k-1}\rightarrow \mathbb{R}^m $$
given by
$$\overline{\phi }((t_{1,j})_{j\in T_1},(t_{2,j})_{j\in T_2},\dots,(t_{k,j})_{j\in T_k})
=\sum _{j\in T} \left( \sum_{\phi(j_1,\dots ,j_k)=v_j} \,\, \prod _{s=1}^{k}t_{s,j_s} \,\, \right) v_j $$
where $n_i$ is the number of elements in $T_i$. Notice that the output of this gate lives in the convex hull spanned by the vectors $T=\{v_1,\dots v_n\}$. For the Fourier series expansions of Boolean functions, see \cite{Bruck}, \cite{GotsmanLinial}, \cite{ODonnell}.

For example, our eyes can only observe red, green, blue light and lack of light namely black. Hence the convex hull created by these can be considered as a 3-simplex $\Delta ^3$ where in general we have
$$\Delta ^n=\{(t_0, t_1, \dots , t_n)\,|\,\sum _{i=0}^n t_i=1\}.$$
Here, if ``black" corresponds to the point $(1,0,0,0)$. Then for $t_0 = 0$ we obtain a color triangle which could be considered as $\Delta ^2$.
One could take the truth values for color as the vertices in a barycentric subdivision of this triangle in particular if ``red" corresponds to $(1,0,0)$ and ``blue" corresponds to $(0,0,1)$ then purple will correspond to $(1/2,0,1/2)$.  In Example \ref{ex:color}, we discuss a logical gate that sends $\Delta^1 \times \Delta^1 \times \Delta^1$ to $\Delta ^3$ and show the two convex subsets are homeomorphic.

We also develop tools that can be used to analyse the continuous sensitivity of a multi-valued logic gate using experimental data about the gate, which makes it a computable and useful invariant to compare logic gates. As an application we show that  continuous sensitivity provides a lower bound for sensitivity of a boolean function (see \cite{Ambainisetal}, \cite{KenyonKutin}, \cite{Nisan}, \cite{Sobol}, \cite{Wang}) considered as a multi-valued logic gate. Another important issue to consider about multi-valued logic gates is reversibility (see \cite{Rabadi}, \cite{Cheng}, \cite{Kutrib}, \cite{Peres}, \cite{Toffoli}, \cite{rev}). We show that a multi-valued logic gate is one-to-one when the continuous sensitivity is equal to a certain number (see Corollary \ref{cor:rev}).

\section{Definition of Continuous Sensitivity}

For $x$ in $\mathbb{R}$, we have
\begin{eqnarray*} \mathrm{sgn}(x)=\left\{
	\begin{array}{ll}
		+1, & \hbox{if $x>0$;} \\
		0, & \hbox{if $x=0$;} \\
		-1, & \hbox{if $x<0$.}
	\end{array}
	\right.
\end{eqnarray*}

Given a function $g:\mathbb{R}^n\rightarrow \mathbb{R}$,  the composition $\mathrm{sgn}\circ g$ is a threshold function.  In this paper we discuss such functions using the following definition.

\begin{definition} Let $C$ be a nonempty subset of $\mathbb{R}^n$. We define the sign of $g:\mathbb{R}^n\rightarrow \mathbb{R}$ over $C$ as follows:
\begin{eqnarray*} \mathrm{sign}_C(g)=\left\{
	\begin{array}{ll}
		+1, & \hbox{if $\mathrm{sgn}( g (C))=\{+1\}$;} \\
		0, & \hbox{if $\mathrm{sgn}( g (C))=\{0\}$;} \\
		-1, & \hbox{if $\mathrm{sgn}( g (C))=\{-1\}$;} \\
        u, & \hbox{otherwise.}
	\end{array}
	\right.
\end{eqnarray*}
\end{definition}
If $g$ and $C$ are as above and the function $g(x_1,x_2,\dots , x_n)$ is differentiable on $C$, then we define the total sign of $g$ over $C$ as follows:
$$\mathrm{Sign}_C(g)=\left\langle  \mathrm{sign}_C\left(\frac{\partial g}{\partial x_1}\right),\mathrm{sign}_C\left(\frac{\partial g}{\partial x_2}\right),\dots, \mathrm{sign}_C\left(\frac{\partial g}{\partial x_n}\right)\right\rangle. $$

Let $\ZS_n$ be the set of all non-zero $n$-tuples $\langle s_1, s_2, \dots , s_n \rangle $ in $\{-1,0,1\}^n$ whose first non-zero term is $1$. We say that a tuple $t=\langle t_1,t_2, \dots,t_n \rangle$ in $\{1,0,-1,u\}^n$ eliminates a tuple $s=\langle s_1,s_2,\dots,s_n \rangle$ in $\ZS _n$ if the following conditions hold
\begin{itemize}
	\item[i)] $t_i \neq 0$ and $s_i \neq 0$ for some $i$,
	\item[ii)] there exists $k \in \{ +1 ,-1 \}$ such that $t_i=ks_i$, for all $i$ with $s_i \neq 0$ and $t_i \neq 0,$
	\item[iii)] $s_i=0$ when $t_i=u$.
\end{itemize}
For $X \subseteq \ZS_n$, we denote the set of elements of $\ZS_n$ eliminated by an element of $X$ by $\ZE (X)$.

Let $C$ be a convex subset of $\mathbb{R}^n$, $M$ be a smooth manifold, and $f:\mathbb{R}^n \rightarrow M$ be a differentiable function.
Now we  define a set assosiated to $f$ as follows:
$$\mathrm{Sens}_C(f)=\left\{\, v\in \ZS _{n}\,\left| \,
\begin{array}{c}
\text{There exists } \pi: M \rightarrow \mathbb{R} \text{ a differentiable  function} \\
\text{such that }\mathrm{Sign}_C( \pi\circ f)  \text{ eliminates }v
\end{array}\, \right. \right\}.$$
In other words
$$\mathrm{Sens}_C(f)=\ZE \left\{\,\,\, \mathrm{Sign}_C( \pi\circ f) \,\,\,\left| \,\,\,
 \pi: M \rightarrow \mathbb{R} \text{ is a differentiable  function }\,\right. \right\}.$$
Note that the larger the set $\mathrm{Sens}_C(f)$ is the more sensitive the function $f$ is to its input variables. Hence we make the following definition.
\begin{definition}\label{definition:contsens} Let $C$ be a convex subset of $\mathbb{R}^n$, $M$ be a smooth manifold, and $f:\mathbb{R}^n \rightarrow M$ be a differentiable function. Then we define continuous sensitivity of $f$ on $C$ as follows:
$$\mathrm{cs}_{C}(f)=\log _{3}\left( 3^{n}-2\left| \ZE(\ZS _{n} - \mathrm{Sens}_{C}(f))\right|\right).$$
\end{definition}
Now we specialize this definition for multi-valued logic gates. Let $k$ be a natural number. For $i$ in $\{1,2,\dots ,k\}$,  let $n_i$ be a natural number and
$$T_i=\{w(i,0), w(i,1), \dots, w(i,n_i-1)\}$$
be the set of possible truth values that we could put in for the $i^{\text{th}}$ variable. Let
$$T=\{v_1,v_2,\dots v_n\}$$
be a set of vectors in $\mathbb{R}^m$. We will consider the elements in $T$ as truth values of the output. A multi-valued logic gate $\phi $ is a function from $T_1\times T_2\times \dots  \times  T_k$ to $T$. Given a multi-valued logic gate $\phi: T_1\times T_2\times \dots  \times  T_k \rightarrow T$, we define the Fourier series expansion of $\phi$ as the function
$$\overline{\phi }: \Delta ^{n_1-1}\times \Delta ^{n_2-1}\times \dots \times \Delta ^{n_k-1}\rightarrow \mathbb{R}^m$$
given by
$$\overline{\phi }((t_{1,j})_{j=0}^{n_1-1},(t_{2,j})_{j=0}^{n_2-1},\dots,(t_{k,j})_{j=0}^{n_k-1})
=\sum _{j=0}^{n}
\left( \sum_{\phi(w(1,j_1),\dots ,w(k,j_k))=v_j} \,\,\prod _{s=1}^{k}t_{s,j_s} \,\, \right) v_j .$$

Take an element $z$ in $T_1\times T_2\times \dots  \times  T_k$. We can write $z$ in the following form
$$z=(w(1,j(z,1)),\dots ,w(k,j(z,k)))$$
where $0\leq j(z,i)\leq n_i$. Let $N(\overline{\phi })=n_1+n_2+\dots +n_k-k.$ We define $C(\overline{\phi },z)$ a convex subset of $\mathbb{R}^{N(\overline{\phi })}$ as follows:
$$C(\overline{\phi },z)=\Delta _{j(z,1)}^{n_1-1}\times \Delta _{j(z,2)}^{n_2-1}\times \dots \times \Delta _{j(z,k)}^{n_k-1}$$
where  $\Delta _{j}^{m}=\{(t_0, \dots, \widehat{t_j} , \dots, t_m)\,|\,(t_0,t_1, \dots , t_m)\in \Delta ^m \text{ and } t_j\neq  0 \}$ for natural numbers $j\leq m$.
Now we define a differentiable function $f(\overline{\phi })$ from $\mathbb{R}^{N(\overline{\phi })}$ to $\mathbb{R}^m$ by
$$f(\overline{\phi })(t_{1,1},\dots ,\widehat{t_{1,j(z,1)}},\dots , \widehat{t_{k,j(z,k)}},\dots , t_{k,n_k-1})= \overline{\phi }((t_{1,j})_{j=0}^{n_1-1},(t_{2,j})_{j=0}^{n_2-1},\dots,(t_{k,j})_{j=0}^{n_k-1})$$
where the right-hand side is considered to be defined everywhere by seeing each component of the right-hand side as a multilinear polynomial and taking
$$t_{i,j(z,i)}=
1-\underset{\scriptsize{\begin{array}{c}
1\leq j \leq n_i-1\\
j\neq j(z,i)
\end{array}}}{\sum} t_{i,j}.$$
\begin{definition} The continuous sensitivity of $\overline{\phi }$ at $z$ is defined as follows:
$$\mathrm{cs}(\overline{\phi },z)= \mathrm{cs}_{C(\overline{\phi },z)}(f(\overline{\phi })) .$$
The continuous sensitivity of $\overline{\phi }$ is defined to be the maximum among them:
$$\mathrm{cs}(\overline{\phi })=\max \{\mathrm{cs}(\overline{\phi },z)\,|\,z\in T_1\times T_2\times \dots \times T_k\}.$$
\end{definition}
The larger this number is the more sensitive the multi-valued logic gate is to its input variables. We will explain this in the next sections.

\section{Reversibility}

The main theorem of this section is the following theorem.
\begin{theorem}\label{thm:rev} Let $C$ be a convex subset of $\mathbb{R}^n$, $M$ be a smooth manifold, and $f:\mathbb{R}^n \rightarrow M$ be a differentiable function. If  $\mathrm{Sens}_C(f)= \ZS _n$ then $f$ is one-to-one on $C$.
\end{theorem}
\begin{proof} Assume that there are two distinct points $x,y$ in $C$ such that $f(x)=f(y)$. Let $v=y-x=<v_1,\dots,v_n>$. Since $x \neq y$ there exists $i$ such that $v_i \neq 0$. Hence $\mathrm{sgn}(v)=(\mathrm{sgn}(v_1),\dots ,\mathrm{sgn}(v_n))$ is in $\ZS^*_n$. Therefore there exists a differentiable function $\pi:  M \rightarrow \mathbb{R}$ such that $\mathrm{Sign}_C(\pi \circ f)$ eliminates $\mathrm{sgn}(v)$. Let $\gamma :[0,1]\to \mathbb{R}^n$ be the linear parametrization of line segment from $x$ to $y$. Since for all $j$ we have
	$$v_j \left( \frac{\partial
	}{\partial x_j}(\pi\circ f)|_{\gamma(t)}\right)\geq 0$$
	and the equality doesn't hold for at least one $j$, we get a contradiction as follows:
	$$0=(\pi\circ f)(y)-(\pi\circ f)(x)= \int
	_{0}^{1}\frac{d}{dt}\left((\pi\circ f\circ \gamma)
	(t)\right)\, dt=$$
	$$=\int _{0}^{1}\sum _{j=1}^n v_j \left( \frac{\partial
	}{\partial x_j}(\pi\circ f)|_{\gamma(t)}\right)\,
	dt>0$$
	
\end{proof}
As consequence of this result we obtain the following result.
\begin{cor}\label{cor:revFirst}
Let $C$ be a convex subset of $\mathbb{R}^n$, $M$ be a smooth manifold, and $f:\mathbb{R}^n \rightarrow M$ be a differentiable function. If $\mathrm{cs}_{C}(f)=n$ then $f$ is a one-to-one function on $C$.
\end{cor}
\begin{proof} If $\mathrm{cs}_{C}(f)=n$ than $\ZS _{n}=\mathrm{Sens}_{C}(f)$. Therefore $f$ is one-to-one on  $C$ by the above theorem.
\end{proof}
For multivalued logic gates we have the following analogous result.
\begin{cor}\label{cor:rev} Let $\overline{\phi }$ be the Fourier series expansion of a multi-valued logic gate. If $\mathrm{cs}(\overline{\phi })=N(\overline{\phi })$ then $\overline{\phi }$ is a one-to-one function on the interior of $\Delta ^{n_1-1}\times \Delta ^{n_2-1}\times \dots \times \Delta ^{n_k-1}$.
\end{cor}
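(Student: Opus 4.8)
The plan is to reduce the hypothesis $\mathrm{cs}(\overline{\phi})=N(\overline{\phi})$ to the hypothesis of Theorem~\ref{thm:rev} and then transport the conclusion from the auxiliary function $f(\overline{\phi})$ back to $\overline{\phi}$. Write $N=N(\overline{\phi})$. First I would record the elementary count $|\ZS_N|=(3^N-1)/2$: of the $3^N$ tuples in $\{-1,0,1\}^N$, removing the zero tuple leaves $3^N-1$ nonzero tuples, which fall into antipodal pairs $\{s,-s\}$, and exactly one member of each pair has first nonzero entry $+1$. Since $\ZE(X)\subseteq\ZS_N$ for every $X\subseteq\ZS_N$, it follows that $2|\ZE(X)|\le 3^N-1$, so the integer under the logarithm defining $\mathrm{cs}(\overline{\phi},z)$ lies between $1$ and $3^N$. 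Hence $\mathrm{cs}(\overline{\phi},z)\le N$, with equality exactly when $\ZE\big(\ZS_N-\mathrm{Sens}_{C(\overline{\phi},z)}(f(\overline{\phi}))\big)=\emptyset$.

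The combinatorial crux is that every $s\in\ZS_N$ eliminates itself: taking $t=s$ and $k=1$ verifies conditions (i)--(iii) immediately, condition (iii) being vacuous because $s$ never takes the value $u$. Thus $Y\subseteq\ZE(Y)$ for every $Y\subseteq\ZS_N$, so $\ZE(Y)=\emptyset$ forces $Y=\emptyset$. Applying this with $Y=\ZS_N-\mathrm{Sens}_{C(\overline{\phi},z)}(f(\overline{\phi}))$ shows that $\mathrm{cs}(\overline{\phi},z)=N$ is equivalent to $\mathrm{Sens}_{C(\overline{\phi},z)}(f(\overline{\phi}))=\ZS_N$, and since $\mathrm{cs}(\overline{\phi})$ is the maximum over $z$, the assumption $\mathrm{cs}(\overline{\phi})=N$ furnishes some $z$ with $\mathrm{Sens}_{C(\overline{\phi},z)}(f(\overline{\phi}))=\ZS_N$. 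To invoke Theorem~\ref{thm:rev} I would then note that $C(\overline{\phi},z)$ is convex: each factor $\Delta_{j}^{m}$ is cut out of an affine subspace by the weak inequalities defining the simplex together with the strict inequality $t_j>0$, hence is convex, and a product of convex sets is convex. Theorem~\ref{thm:rev} then gives that $f(\overline{\phi})$ is one-to-one on $C(\overline{\phi},z)$.

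It remains to transport this to $\overline{\phi}$. Let $\iota$ be the map that deletes, for each $i$, the coordinate $t_{i,j(z,i)}$ from a point of the product of simplices; by the defining equation of $f(\overline{\phi})$ one has $f(\overline{\phi})\circ\iota=\overline{\phi}$, and $\iota$ is injective because each deleted coordinate is recovered from $t_{i,j(z,i)}=1-\sum_{j\ne j(z,i)}t_{i,j}$. On the interior of $\Delta^{n_1-1}\times\cdots\times\Delta^{n_k-1}$ every barycentric coordinate is strictly positive, so $t_{i,j(z,i)}\ne 0$ for all $i$ and $\iota$ maps the interior into $C(\overline{\phi},z)$; consequently $\overline{\phi}(x)=\overline{\phi}(y)$ for interior points $x,y$ gives $f(\overline{\phi})(\iota(x))=f(\overline{\phi})(\iota(y))$ with both arguments in $C(\overline{\phi},z)$, whence $\iota(x)=\iota(y)$ and then $x=y$. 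The step I expect to be the main obstacle is the reduction in the first two paragraphs: the content lies in seeing that the maximal value $3^N$ under the logarithm forces $\mathrm{Sens}_{C(\overline{\phi},z)}(f(\overline{\phi}))$ to be the entire set $\ZS_N$, which hinges on the self-elimination property; the convexity check and the $\iota$-bookkeeping are then routine.
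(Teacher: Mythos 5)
Your proposal is correct and follows the same route as the paper: extract from $\mathrm{cs}(\overline{\phi})=N$ some $z$ with $\ZE(\ZS_N-\mathrm{Sens}_{C(\overline{\phi},z)}(f(\overline{\phi})))=\emptyset$, conclude $\mathrm{Sens}_{C(\overline{\phi},z)}(f(\overline{\phi}))=\ZS_N$, apply Theorem~\ref{thm:rev}, and transport back to $\overline{\phi}$. You simply fill in details the paper leaves implicit (the self-elimination property forcing $\ZE(Y)=\emptyset\Rightarrow Y=\emptyset$, the convexity of $C(\overline{\phi},z)$, and the coordinate-deletion bookkeeping), all of which check out.
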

\begin{proof} First notice that $\mathrm{cs}(\overline{\phi })=N(\overline{\phi })$ means
$$\mathrm{cs}(\overline{\phi },z)=N(\overline{\phi })$$
for some $z$. By the above corollary, this means that $f(\overline{\phi })$ is one-to-one on  $C(\overline{\phi },z)$. So $\overline{\phi }$ is one-to-one on the interior of $\Delta ^{n_1-1}\times \Delta ^{n_2-1}\times \dots \times \Delta ^{n_k-1}$.
\end{proof}

Due to the above results one can see that it is important to study the minimal elements of the following poset
$$E(\ZS_n)=\{X \subseteq \ZS_n| \ \ZE(X)= \ZS_n\}$$
under inclusion.

\begin{example}\label{ex:base}When $n=2$, $\ZS_2=\{(0,1),(1,0),(1,1),(1,-1)\}$ and
\begin{eqnarray*} \ZE(\{(0,1)\})&=&\{(0,1),(1,1),(1,-1)\},\\
                 \ZE(\{(1,0)\})&=&\{(1,0),(1,1),(1,-1)\},\\
                 \ZE(\{(1,1)\})&=&\{(0,1),(1,0),(1,1)\},\\
                 \ZE(\{(1,-1)\})&=&\{(0,1),(1,0),(1,-1)\}.\\
\end{eqnarray*} Therefore $E(\ZS_n)$ is the set of subsets of $\ZS_2$ of size greater than equal to $2$ and the minimal elements of $E(\ZS_2)$ are the subsets of $\ZS_2$ of size $2$.
\end{example}

\begin{example}\label{ex:cat} Let $f_1,f_2:\Delta^1 \times \Delta^1 \rightarrow \Delta^2$ and $f_i: \Delta^2 \rightarrow \Delta^2$ for $3 \leq i \leq 6$ be the continuous functions defined by

\begin{eqnarray*}
f_1((t_0,t_1),(s_0,s_1))&=&(\frac{t_0s_1}{2}+\frac{t_1s_1}{3}, \frac{t_1s_1}{3}, s_0+\frac{t_0s_1}{2}+\frac{t_1s_1}{3})\\
f_2((t_0,t_1),(s_0,s_1))&=&(s_0+\frac{t_0s_1}{2}+\frac{t_1s_1}{3},\frac{t_1s_1}{3}, \frac{t_0s_1}{2}+\frac{t_1s_1}{3})\\
f_3(t_0,t_1,t_2)&=&(t_0+\frac{t_1}{2}+\frac{t_2}{3}, \frac{t_1}{2}+\frac{t_2}{3} ,\frac{t_2}{3})\\
f_4(t_0,t_1,t_2)&=&(\frac{t_1}{2}+\frac{t_2}{3}, t_0+\frac{t_1}{2}+\frac{t_2}{3},\frac{t_2}{3})\\
f_5(t_0,t_1,t_2)&=&(\frac{t_2}{3},\frac{t_1}{2}+\frac{t_2}{3}, t_0+\frac{t_1}{2}+\frac{t_2}{3})\\
f_6(t_0,t_1,t_2)&=&(\frac{t_2}{3},\frac{t_1}{2}+\frac{t_2}{3}, t_0+\frac{t_1}{2}+\frac{t_2}{3})\\
\end{eqnarray*}
The image of $f_i$ is the region denoted by $i$ in the following picture:
\begin{center}
\begin{tikzpicture}
\draw (0,0) -- (2,2) -- (4,0)-- (0,0);
\draw (0,0)--(3,1);
\draw (1,1)--(4,0);
\draw (2,0) --(2,2);
\draw (2.5,0.2) node{$1$};
\draw (1.5,0.2) node{$2$};
\draw (1,0.6) node{$3$};
\draw (1.5,1.2) node{$4$};
\draw (2.5,1.2) node{$5$};
\draw (3,0.6) node{$6$};
\end{tikzpicture}
\end{center}
Let $f^j_i=\pi_j \circ f_i$ where $\pi_j: \mathbb{R}^3 \rightarrow \mathbb{R}$ is the projection to the $j$-th coordinate. Then we have

\begin{eqnarray*}
\frac{\partial f_1^1}{\partial t_1}&=&\frac{-s_1}{6}<0 \ \mathrm{if} \ s_1\neq 0, \qquad \frac{\partial f_1^1}{\partial s_1}=\frac{t_0}{2}+\frac{t_1}{3}>0,\\
\frac{\partial }{\partial t_1}(f_1^2-f_1^3)&=&0, \qquad  \qquad \ \quad  \frac{\partial}{\partial s_1}(f_1^2-f_1^3)=1-\frac{t_0}{2}>0.\end{eqnarray*} Here  $\mathrm{Sign}_A(f_1^1)=(-1,1)$ and $\mathrm{Sign}_A(f_1^2-f_1^3)=(0,1)$ where $A$ is the set of all points $((t_0,t_1),(s_0,s_1)) \in |\Delta^1|\times |\Delta^1|$ with $ s_1\neq 0$. Therefore by above Theorem and Example, $\left.f_1\right|_A$ is injective .

For $f_3$, we have
\begin{eqnarray*}
\frac{\partial f^1_3}{\partial t_1}&=&\frac{1}{6}, \qquad   \frac{\partial f_3^1}{\partial s_1}=\frac{-5}{6}, \qquad 
\frac{\partial f^3_3}{\partial t_1}=\frac{-1}{3},   \qquad \frac{\partial f_3^3}{\partial s_1}=\frac{-1}{3}.\\
\end{eqnarray*} Since $\mathrm{Sign}_{\Delta^2}(f_3^1)=(1,-1)$ and $\mathrm{Sign}_{\Delta^2}(f_3^3)=(-1,-1)$, the map $f_3$ is injective . Similarly, one can show that the maps $\left.f_2\right|_A$, $f_4$, $f_5$ and $f_6$ are injective. Indeed, we can glue them to obtain a homeomorphism between the following spaces.
\begin{center}
\begin{tikzpicture}
\draw (0,0) -- (2,2) -- (4,0)-- (0,0);
\draw (1,1)--(2,0);
\draw (3,1)--(2,0);
\draw (2,0) --(2,2);
\draw (2,0) --(2,-1);
\draw (0,0) .. controls (2,-1.3) .. (4,0);
\draw (2.6,-0.4) node{$\left.f_1\right|_A$};
\draw (1.4,-0.4) node{$\left.f_2\right|_A$};
\draw (1,0.3) node{$f_3$};
\draw (1.6,1) node{$f_4$};
\draw (2.4,1) node{$f_5$};
\draw (3,0.3) node{$f_6$};
\draw (5,0.2) node{$\cong$};
\draw (6,-0.4) -- (8,1.6) -- (10,-0.4)-- (6,-0.4);
\end{tikzpicture}
\end{center}
Here the simplicial complex on the left is homeomorphic to the part of the realization of the category $\mathcal{D}$ discuss in the introduction.
\end{example}

\begin{prop} Let $v \in \mathbb{R}^n$ and $s$ be the sign vector of $v$. If either $s$ or $-s$ is in $\ZE(x)$ then $v \cdot x \neq 0.$
\end{prop}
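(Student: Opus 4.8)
The plan is to unwind the definition of elimination and reduce the claim to an elementary sign computation on the inner product $v\cdot x=\sum_i v_i x_i$.

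First I would remove the ambiguity between $s$ and $-s$ by symmetry. Replacing $v$ with $-v$ negates both the sign vector $s$ and the number $v\cdot x$, while leaving the conclusion $v\cdot x\neq 0$ unchanged; so I may assume that the one of $s,-s$ lying in $\ZS_n$ is $s$ itself and that $s\in\ZE(x)$, i.e. that (the sign vector of) $x$ eliminates $s$.

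Next I would spell out what ``$x$ eliminates $s$'' gives. By condition (ii) there is a single $k\in\{+1,-1\}$ with $\mathrm{sgn}(x_i)=k\,s_i$ at every coordinate $i$ where both $s_i\neq 0$ and $x_i\neq 0$; equivalently $s_i\cdot\mathrm{sgn}(x_i)=k$ on all such coordinates. Since $s_i=\mathrm{sgn}(v_i)$, this says exactly that $v_i x_i$ has sign $k$ whenever $v_i\neq 0$ and $x_i\neq 0$. Condition (iii) plays no role here, as $x$ is a genuine vector and its sign vector has no entry equal to $u$.

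Then I would compute the inner product coordinate by coordinate. A coordinate with $v_i=0$ or $x_i=0$ contributes $0$, while every remaining coordinate contributes a term $v_i x_i=k\,|v_i x_i|$ of the common sign $k$, so there is no cancellation and $v\cdot x=k\sum_{i:\,v_i\neq 0,\,x_i\neq 0}|v_i x_i|$. The one way the argument could degenerate is if this sum were empty, forcing $v\cdot x=0$; but that case is precisely excluded by condition (i), which guarantees at least one index $i$ with $v_i\neq 0$ and $x_i\neq 0$. Hence $v\cdot x$ is a positive multiple of $k$ and is nonzero. The main (though mild) obstacle is bookkeeping: making sure condition (i) is invoked to rule out the empty-sum case and that the common sign $k$ from condition (ii) forbids any cancellation among the surviving terms.
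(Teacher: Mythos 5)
Your proof is correct and follows essentially the same route as the paper's: the same reduction by negating $v$ to assume $s\in\ZE(x)$, the same restriction of the inner product to indices where both $v_i$ and $x_i$ are nonzero, and the same use of the common sign $k$ from condition (ii) to rule out cancellation. Your explicit appeal to condition (i) to exclude the empty-sum case is a small point the paper leaves implicit, but otherwise the arguments coincide.
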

Note that here either $s$ or $-s$ is an element of $\ZS_n$.

\begin{proof} Let $s=(s_1,\dots,s_n)$ be the sign vector of $v=(v_1,\dots,v_n)$. Without loss of generality suppose that $s=(s_1,\dots,s_n)$ is in $\ZE(x)$ for some $x=(x_1,\dots,x_n) \in \ZS_n$. Let $\{i_1,\dots,i_j\}$ be the set of all indices for which $s_i \neq 0$ and $x_i \neq 0$. Since $s \in \ZE(x)$, there exists $k\in \{\pm 1\}$ such that $s_{i_r}=kx_{i_r}$ for all $1\leq r \leq j$. By definition $s_i=0$ if and only if $v_i=0$. Therefore we have
$$v\cdot x=\sum_{r=1}^j v_{i_r}x_{i_r}=\sum_{r=1}^j|v_{i_r}|s_{i_r}x_{i_r}=k\sum_{r=1}^j|v_{i_r}|(x_{i_r})^2 \neq 0.$$
\end{proof}
Given a subset $X=\{X^1,\dots,X^m\}$ of $\ZS_n$, let $M_X$ be the $(m \times n)$-matrix whose $i$-th row is $X^i$. As an immediate consequence of the above proposition, we have the following results.
\begin{cor} Let $X=\{X^1,\dots,X^m\}\subseteq \ZS_n$. If the columns of $M_X$ is linearly dependent then $X \notin E(\ZS_n)$. In particular, if $X \in E(\ZS_n)$ has size $n$ then $X$ is linearly independent.
\end{cor}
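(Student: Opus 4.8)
The plan is to derive everything from the preceding proposition by reading linear dependence of the columns of $M_X$ as a family of orthogonality relations against the rows of $M_X$. First I would observe that the columns of $M_X$ being linearly dependent means exactly that there is a nonzero vector $v=(v_1,\dots,v_n)\in\mathbb{R}^n$ with $M_X v=0$; unwinding this row by row says precisely that $v\cdot X^i=0$ for every $i\in\{1,\dots,m\}$, since the $i$-th row of $M_X$ is $X^i$.

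Next I would pass to the sign vector $s$ of $v$. Since $v\neq 0$, the vector $s$ is a nonzero element of $\{-1,0,1\}^n$, so exactly one of $s$ and $-s$ has first nonzero entry equal to $1$; call that one $w$, so that $w\in\ZS_n$ (this is exactly the observation recorded in the note following the proposition). Now I would apply the contrapositive of the proposition to each row: because $v\cdot X^i=0$, neither $s$ nor $-s$ lies in $\ZE(\{X^i\})$. Using the definition of $\ZE(X)$ as the set of elements of $\ZS_n$ eliminated by some element of $X$, that is, $\ZE(X)=\bigcup_{i=1}^m \ZE(\{X^i\})$, it follows that $w$ lies in $\ZS_n$ but not in $\ZE(X)$. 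Hence $\ZE(X)\neq\ZS_n$, which is exactly the statement $X\notin E(\ZS_n)$.

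For the ``in particular'' clause I would argue by contraposition of what was just proved: if $X\in E(\ZS_n)$ then the columns of $M_X$ must be linearly independent. When $|X|=n$ the matrix $M_X$ is square of size $n\times n$, and for a square matrix linear independence of the columns is equivalent to linear independence of the rows, both being equivalent to $M_X$ having full rank. Since the rows of $M_X$ are precisely the elements of $X$, this yields that $X$ is linearly independent.

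I expect no serious obstacle here; the argument is essentially a direct repackaging of the proposition. The only points that need care are the bookkeeping between the two candidate sign vectors $s$ and $-s$, ensuring that the one actually lying in $\ZS_n$ is used as the witness showing $\ZE(X)\neq\ZS_n$, and correctly identifying the single relation $M_X v=0$ with the full family of orthogonality conditions $v\cdot X^i=0$ that feeds into the proposition.
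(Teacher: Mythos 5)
Your argument is correct and is essentially the paper's own proof: both take a nonzero vector in the kernel of $M_X$, note that this gives $v\cdot X^i=0$ for every row, and invoke the contrapositive of the preceding proposition to produce an element of $\ZS_n$ not in $\ZE(X)$. Your handling of which of $s$ or $-s$ lies in $\ZS_n$ is in fact slightly more careful than the paper's (which normalizes $a$ itself), and the square-matrix argument for the ``in particular'' clause is the intended one.
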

\begin{proof} Let $a_1, \dots, a_n \in \mathbb{R}$ be such that $\overset{n}{\underset{i=1}{\sum}}a_ic_i=0$ where $c_i$ is the $i$-th column of $M_X$. Suppose also that the first non-zero term of $a=(a_1,\dots,a_n)$ is positive, that is, $a \in \ZS_n$. Since $a\cdot X^i=0$ for $1\leq i \leq m$, the sign vector $s$ of $a$ is in $\ZS_n - \ZE(X)$ by the above proposition and hence $X \notin E(\ZS_n)$.
\end{proof}
\begin{cor} If $X \in E(\ZS_n)$ then $|X| \geq n$.
\end{cor}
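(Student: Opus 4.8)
The plan is to deduce the bound directly from the preceding corollary by counting dimensions. First I would invoke the contrapositive of that corollary: it asserts that linear dependence of the columns of $M_X$ forces $X \notin E(\ZS_n)$, so membership $X \in E(\ZS_n)$ guarantees that the columns of $M_X$ are linearly independent.

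Having secured column independence, I would then simply read off the sizes. Setting $m = |X|$, recall that $M_X$ is by definition the $(m \times n)$-matrix whose $i$-th row is $X^i \in \ZS_n \subseteq \{-1,0,1\}^n$; thus its rows are indexed by the $m$ elements of $X$ and its entries by the $n$ coordinates, so its columns form a collection of $n$ vectors in $\mathbb{R}^m$. Since $n$ linearly independent vectors cannot live in a space of dimension smaller than $n$, we must have $n \leq m$, that is, $|X| \geq n$, which is exactly the claim.

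I do not anticipate any genuine obstacle here, as every step is forced once the previous corollary is in hand. The only content beyond bookkeeping is the standard rank inequality $\mathrm{rk}(M_X) \leq m$, which combined with the full column rank $\mathrm{rk}(M_X) = n$ (coming from column independence) yields $n \leq m$. The one point worth stating carefully is the orientation of the matrix, namely that $M_X$ has exactly $n$ columns sitting in the $m$-dimensional ambient space $\mathbb{R}^m$, so that it is the columns, and not the rows, whose independence delivers the desired inequality.
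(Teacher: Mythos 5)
Your proof is correct and is essentially the paper's argument in dual form: the paper observes that if $|X|<n$ one can choose a nonzero vector orthogonal to all the rows of $M_X$ (whose sign vector is then uneliminated by the Proposition), while you invoke the preceding Corollary to get linear independence of the $n$ columns of the $m\times n$ matrix $M_X$ and conclude $m\geq n$. Since the row space having dimension less than $n$ is precisely the statement that the columns are dependent, both routes are the same rank computation resting on the same Proposition.
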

\begin{proof} If $|X|<n$ then we can choose a vector $v$ which is orthogonal to all the vectors in $X$.
\end{proof}

Note that the inequality in the above corollary is strict since $E=\{e_1,\dots,e_n\}$ is in $E(\ZS_n)$ where $e_k=(0,\dots,0,\underbrace{1}_{k},0,\dots,0)$. Since every element of $E(\ZS_n)$ of size $n$ is minimal, $E$ is indeed a minimal element of $E(\ZS_n)$.

Let $\ZS_n^0$ be the set of all elements of $\ZS_n$ with non-zero coordinates. These elements are also the ones which eliminates the smallest number of elements of $\ZS_n$.
\begin{lemma} The subset $\ZS_n^0$ is a minimal element of $E(\ZS_n)$.
\end{lemma}
\begin{proof} Since $s=(s_1,\dots,s_n)\in \ZS_n$ is eliminated by all $t=(t_1,\dots,t_n)\in \ZS_n^0$ where $t_i=s_i$ whenever $s_i\neq 0$, $\ZS_n^0$ is in $E(\ZS_n)$. On the other hand the element $t\in \ZS_n^0$ is eliminated by $t' \in \ZS_n^0$ if and only if $t=t'$. Therefore $\ZS_n^0$ is minimal.
\end{proof}

\begin{example}\label{ex:color}
Define $T_{\text{red}}=\{0,1\}$, $T_{\text{green}}=\{0,1\}$, $T_{\text{blue}}=\{0,1\}$, and $$T_{\text{color}}=\left\{
\begin{array}{c}
\underset{\text{black}}{\underbrace{(1,0,0,0)}},
\underset{\text{red}}{\underbrace{(0,1,0,0)}},
\underset{\text{green}}{\underbrace{(0,0,1,0)}},
\underset{\text{blue}}{\underbrace{(0,0,0,1)}},
\underset{\text{purple}}{\underbrace{(0,1/2,0,1/2)}}, \\
\underset{\text{yellow}}{\underbrace{(0,0,1/2,1/2)}},
\underset{\text{aqua}}{\underbrace{(0,1/2,1/2,0)}},
\underset{\text{white}}{\underbrace{(0,1/3,1/3,1/3)}}
\end{array}
\right\}$$
Let $$\phi :T_{\text{red}}\times T_{\text{green}}\times T_{\text{blue}}\rightarrow T_{\text{color}}$$ be the multi-valued logic gate which sends the input $(r,g,b)$ to a point which corresponds to the color obtained by mixing the colors whose truth value is $1$. For example,
$\phi(0,0,0)=(1,0,0,0)$, $\phi(0,1,0)=(0,0,1,0)$ and $\phi(0,1,1)=(0,0, 1/2,1/2)$. The induced multi-valued logic gate
$$\overline{\phi}:\Delta^1 \times \Delta^1 \times \Delta^1 \rightarrow \Delta ^3$$
is given by
$$\overline{\phi}((r_0,r_1),(g_0,g_1),(b_0,b_1))=(f_1,f_2,f_3,f_4)$$
where
 $$\begin{array}{c}
f_1=r_0*g_0*b_0\\
f_2=r_1*g_0*b_0+1/2*r_1*g_0*b_1+1/2*r_1*g_1*b_0+1/3*r_1*g_1*b_1\\
f_3=r_0*g_1*b_0+1/2*r_0*g_1*b_1+1/2*r_1*g_1*b_0+1/3*r_1*g_1*b_1\\
f_4=r_0*g_0*b_1+1/2*r_0*g_1*b_1+1/2*r_1*g_0*b_1+1/3*r_1*g_1*b_1.\\
\end{array}$$
Therefore we have
\begin{eqnarray*}\frac{\partial }{\partial r_1}(f_2-f_3-f_4)&=&g_0*b_0+g_0*b_1+g_1*b_0+2/3*g_1*b_1>0\\
\frac{\partial }{\partial g_1}(f_2-f_3-f_4)&=&-r_0*b_0-r_1*b_0-1/3*r_1*b_1<0\\
\frac{\partial }{\partial b_1}(f_2-f_3-f_4)&=&-r_0*g_0-r_1*g_0-1/3*r_1*g_1<0\end{eqnarray*}
and hence
$$\mathrm{Sign}_C(f_2-f_3-f_4)=(1,-1,-1)$$
where $C=C(\overline{\phi},(0,0,0))$. Similarly one can show that
\begin{eqnarray*}\mathrm{Sign}_C(f_2+f_3+f_4)&=&(1,1,1)\\
\mathrm{Sign}_C(f_2-f_3+f_4)&=&(1,-1,1)\\
\mathrm{Sign}_C(f_2+f_3-f_4)&=&(1,1,-1).\end{eqnarray*}
Since
$$\ZS_3^0=\{(1,1,1),(1,-1,-1),(1,-1,1),(1,1,-1)\},$$
we can conclude that $\overline{\phi}$ is a reversible gate by repeated application of the above lemma and the main result of this section.
\end{example}

\section{Computing Continuous Sensitivity}

We could use experimental data about a multi-valued logic gate to obtain an upper bound on the continuous sensitivity of the gate due to the following simple lemma.
\begin{lemma} If $X$ and $Z$ are subsets of $\ZS _N$ and $Z\cap \ZE (X)=\emptyset $ then
$$|\ZE (Z)|\leq |\ZE (S_N-\ZE (X))|$$
\end{lemma}
In the above lemma consider $X=\mathrm{Sens}_C(f)$ and $Z$ as signs eliminated by experimental data.
As an application of this lemma we can show that sensitivity (see Section 2 in \cite{Ambainisetal}) of a boolean function is an upper bound for its continuous sensitivity.

\begin{definition}
  Let $\phi :\{0,1\}^N\rightarrow\{0,1\}$ be a boolean function and $z$ be an element in $\{0,1\}^N$. Then the sensitivity of $\phi $ at the input $z$ is defined as follows:
  $$s(\phi ,z)=\text{ number of indices }i\text{ such that }\phi(z)\neq \phi(z^i)$$
  where $z^i$ denotes the element in $\{0,1\}^N$ obtained by changing the $i^{th}$ coordinate of $z$. The sensitivity of $\phi $ is defined as follows:
  $$s(\phi )=\max \left\{\,s(\phi ,z)\,|\,z\in \{0,1\}^N\,\right\}.$$
\end{definition}

\begin{theorem}
  Let $\phi :\{0,1\}^N\rightarrow\{0,1\}$ be a boolean function. Then
  $$ cs(\overline{\phi})\leq s(\phi ).$$
\end{theorem}
\begin{proof}
  Let  $z=(z_1,z_2,\dots,z_N)$ be an element in $\{0,1\}^N$. Define $D_z=\{\,i\,|\, \phi(z)= \phi(z^i)\, \}$. Then we have
  $$s(\phi ,z)=N-|D_z|.$$
Notice that by Mean Value Theorem for every $i$ in $D_z$ there exists $c_{i,z}$ in $C(\overline{\phi },z)$ such that $$\frac{\partial \overline{\phi }}{\partial x_i}(z_1,z_2,\dots,z_{i-1},c_{i,z},z_{i+1},z_{i+2},\dots ,z_N)=0,$$
hence $e_i$ is not $\mathrm{Sens}_{C(\overline{\phi },z)}(f(\overline{\phi }))$ for all $i$ in $D_z$. Therefore, we  have
$$\left|\ZE(\ZS _{N}-\mathrm{Sens}_{C(\overline{\phi },z)}(f(\overline{\phi })))\right|
\geq
|\ZE(\{\,e_i\,|\,i\in D_z\,\})|=\frac{3^N-3^{N-|D_z|}}{2}$$
and hence
$$cs(\overline{\phi},z)=
\log _{3}\left( 3^{N}-2\left| \ZE(\ZS _{N}-\mathrm{Sens}_{C(\overline{\phi },z)}(f(\overline{\phi })))\right|\right)
\leq N-|D_z|= s (\phi ,z)$$
Therefore the result follows.
 \end{proof}

The above result shows that it is important to know how to count eliminated signs. For the rest of this section we will discuss methods for counting eliminated signs. Notice that the number of elements eliminated by each element of $\ZS_2$ is the same. This is not true in general. For example for $x=(0,0,1)$ and $y=(1,1,1)$ in $\ZS_3$, we have
{\small
\begin{eqnarray*} \ZE(\{x\})&=&\ZS_3 - \{(0,1,0),(1,0,0),(1,1,0),(1,-1,0)\},\\
\ZE(\{y\})&=&\ZS_3 - \{(0,1,-1),(1,0,-1),(1,1,-1),(1,-1,0),(1,-1,1),(1,-1,-1)\}
\end{eqnarray*}}and hence $|\ZE(\{x\})| \neq |\ZE(\{y\})|$.
In general the size of $\ZE(\{x\})$ depends only on the number of zeros of $x$ and is given as follows.
\begin{lemma}\label{lem:counting-single} For $x \in \ZS_n$, $|\ZE(\{x\})|=3^{z(x)}(2^{n-z(x)}-1)$ where $z(x)$ is the number of zeros of $x$.
\end{lemma}

\begin{proof} Let $x=(x_1,\cdots, x_n) \in \ZS_n$.  We first consider the case  $z(x)=0$. In this case $x$ eliminates $s=(1,s_2,\cdots, s_n)$ if and only if $s_i=0$ or $x_i$ for $i \geq 2$. If the first nonzero term of $s$ is $k$-th one then $x$ eliminates $s$ either $s_i \in \{0,x_i\}$ or $s_i \in \{0,-x_i\}$. Therefore when $z(x)=0$, we have
$$|\ZE(\{x\})|=2^{n-1}+\sum_{k=2}^n2^{n-k}=2^n-1.$$

Now suppose that $z(x) \neq 0$. We prove this case by induction on $n$. The case $n=2$ follows from Example \ref{ex:base}. Let $x'=(x_1,\cdots,\widehat{x_{j}},\cdots, x_n) \in \ZS_{n-1}$ where $j=\mathrm{min}\{i| t_i=0\}$. When $j=1$
$$\ZE(\{x\})= \{(\varepsilon,s_1,\cdots,s_{n-1}), \ (1,-s_1,\cdots, -s_{n-1})| \ \varepsilon \in\{0,1\},  \ (s_1,\cdots,s_{n-1})\in \ZE(x')\},$$
and otherwise we have
$$\ZE(\{x\})= \{(s_1, \cdots, \varepsilon, \cdots,s_{n-1})| \ \varepsilon \in\{-1,0,1\},  \ (s_1,\cdots,s_{n-1})\in \ZE(x')\}.$$ Therefore the number of elements of $\ZE(x)$ is three times the number of elements of $\ZE(x')$ and hence the result follows by induction.
\end{proof}

Now we generalize the above lemma to the intersections of eliminated sets for $m$-many elements in $\ZS_n$.  Let $z_X$ be the number of zero columns of $M_X$.

\begin{theorem}\label{thm:counting-intersctions} For $X=\{X^1,\dots,X^m \}\subseteq \ZS_n$, we have
$$\big|\bigcap_{i=1}^m \ZE(X^i)\big|=3^{z_{X}}\Big(-(-2)^{m-1}+\sum_{\alpha \in \ZS_m}(-1)^{z(\alpha)}2^{z(\alpha)+|\mathrm{BSp}(X,\alpha)|}\Big)$$where for any $\alpha \in \ZS_m$,
$\mathrm{BSp}(X,\alpha)$ is the set of columns of $M_X$ of the form $\pm \sum_{i=1}^ma_i\alpha_ie_i$ with $a_i \in \{0,1\}^m - \{(0,\dots,0)\}$.
\end{theorem}

We prove this theorem using the following lemma.

\begin{lemma}Let $X=\{X^1,\dots,X^m \}\subseteq \ZS_n$ where $X^j=(X^j_1,\dots,X^j_n)$ be such that $M_X$ has no zero column. Then the number of elements of the form $(1,s_2,\cdots,s_{n})$ in $\bigcap_{i=1}^m \ZE(X^i)$ is
$$\sum_{\alpha \in A} (-1)^{z(\alpha)}2^{z(\alpha)+|\mathrm{BSp}(X,\alpha)|-1}$$ where $$A=\{(\alpha_1,\dots,\alpha_m)\in \ZS_m| \ \alpha_j=1 \ \mathrm{whenever} \ X^j_1\neq0\}.$$
\end{lemma}
\begin{proof} Let $c^i=(X^1_i,\dots, X^m_i)^T$ be the $i$-th column of $M_X$ for $1\leq i \leq n$. By reordering elements of $X$, we can assume that $c^1=e_1^T+\dots+e_r^T$ for some $1\leq r\leq m$. In this case, we have $A=\{(1,\dots,1,\alpha_{r+1},\dots, \alpha_m)| \ \alpha_{r+i}\in\{-1,0,1\}\}$. Note that the $i$-th column $c^i$ is not in $\underset{\alpha \in A}{\bigcup}\mathrm{Bsp}(X,\alpha)$ if and only if $c^i_j=1$ and $c^i_{j'}=-1$ for some $1\leq j,j'\leq r$. Moreover if $s=(1,s_2,\dots,s_{n})\in  \overset{m}{\underset{i=1}{\bigcap}} \ZE(X^i)$ then $s_i=0$ for all $i$ for which $c^i \notin \underset{\alpha \in A}{\bigcup}\mathrm{Bsp}(X,\alpha)$. So without loss of generality we can assume that $c^i \in \mathrm{Bsp}(X,\alpha)$ for some $\alpha=(1,\dots,1,\alpha_{r+1},\dots,\alpha_m)\in \ZS_m$ for all $i$, that is, $c^i=\pm \sum a_j^i\alpha_je_j$ where $a_j^i\in \{0,1\}$ for $1\leq i \leq m$.

For each $\alpha=(1,\dots,1,\alpha_{r+1},\dots,\alpha_m)\in \ZS_m$ with $\alpha_i\neq 0$ for any $i$, let $A_{\alpha}$ be the set of all $s=(1,s_2,\dots,s_n)$ where $s_i \in \{0,1\}$ if $c^i=\sum a_j^i\alpha_je_j$ and $s_i\in\{0,-1\}$ if $c^i=-\sum a_j^i\alpha_je_j$. Here, $|A_{\alpha}|=2^{|\mathrm{BSp}(X',\alpha)|}$. An element $s=(1,s_2,\dots,s_n) \in \ZS_n$ lies in the intersection of $A_{\alpha}$ and $ \overset{m}{\underset{i=1}{\bigcap}} \ZE(X^i)$ if and only if $s$ and $X^i$ has common non-zero elements for each $r+1 \leq i \leq m$. Clearly, $s\in A_{\alpha}$ does not satisfy this property if there exists $r+1\leq k\leq m$ such that $a_k^i\neq0 $ implies $s^i=0$. To eliminate these terms, we first need to remove the ones with $s_i=0$ for all $a_k^i\neq 0$ for all $r+1\leq k \leq m$. For each $k$, there are $2^{|\mathrm{BSp}(X',\alpha^k)|}$ many such $s$ in $A_{\alpha}$ where $\alpha_j^k=\alpha_j$ if $j\neq k$ and $\alpha_k^k=0$. Then we need to add the ones with $s_i=0$ for all $a^i_k \neq 0$ or $a^i_{k'}\neq 0$ for $r+1 \leq k,k' \leq m$ since we remove them twice. There are $2^{|\mathrm{BSp}(X',\alpha^{k,k'})|}$ many such $s$ in $A_{\alpha}$ where $\alpha_j^{k,k'}=\alpha_j$ if $j \neq k$ or $k'$ and $\alpha_k^{k,k'}=\alpha_{k'}^{k,k'}=0$. Then we need to remove the ones corresponding to triples since we add them twice. By continuing in this way, we obtain that
$$\big|A_{\alpha}\bigcap \Big(\bigcap_{i=1}^m \ZE(X^i)\Big)\big|= \sum_{\beta \in S_{\alpha}} (-1)^{z(\beta)}2^{|\mathrm{BSp}(X',\beta)|}$$where $S_{\alpha}=\{\beta=(1,\dots,1,\beta_{r+1},\dots,\beta_m)| \ \beta_j=\alpha_j \ \mathrm{or} \ 0, \ r+1 \leq j \leq m \}$.

Let $\alpha=(1,\dots,1,\alpha_{r+1,\dots,m})$ and $\gamma=(1, \dots, 1, \gamma_{r+1},\dots,\gamma_m)$ be distinct elements of $\{\pm1\}^m$, i.e, there exist $k$ such that $\alpha_k=-\gamma_k$. If $s \in A_{\alpha} \bigcap A_{\gamma}$ then whenever $a^i_k\neq 0$. So we have $A_{\alpha}\bigcap A_{\gamma} \bigcap \Big( \overset{m}{\underset{i=1}{\bigcap}} \ZE(X^i)\Big) = \emptyset$. Clearly, if $s=(1,s_2, \dots, s_n)$ is in the intersection of $\ZE(X^i)$'s then $s \in A_{\alpha}$. Therefore we have
$$\big|\bigcap_{i=1}^m \ZE(X^i)\big|=\sum_{\alpha=(1,\dots,1,\alpha_{r+1},\dots, \alpha_m)\in \{\pm 1\}^m} \ \sum_{\beta \in S_{\alpha}}(-1)^{z(\beta)}2^{|\mathrm{BSp}(X',\beta)|}.$$ Since $\beta$ is an element of $S_{\alpha}$ for $2^{z(\beta)}$-many distinct $\alpha$'s, we have
$$\big|\bigcap_{i=1}^m \ZE(X^i)\big|=\sum_{\beta=(1,\dots,1,\beta_{r+1},\dots, \beta_m)\in \ZS_m}(-1)^{z(\beta)}2^{z(\beta)+|\mathrm{BSp}(X',\beta)|}.$$ Since $|\mathrm{BSp}(X',\beta)|=|\mathrm{BSp}(X,\beta)|-1$, the result follows.
\end{proof}
\begin{proof}[Proof of Theorem \ref{thm:counting-intersctions}] Let $X^i=(X^i_1,\dots, X^i_n)$ for $1\leq i\leq m$. We proceed by induction on $n$. The case $n=2$ follows from Example \ref{ex:base}. Note that in this case the size of the intersection of eliminated set of two different elements of $\ZS_2$ is $2$, three different elements is $1$ and the intersection of eliminated sets of all is 0. For $n>2$, we first consider the case where $M_X$ has a zero column. Let $\widetilde{X}=\{\widetilde{X}^1,\dots,\widetilde{X}^m\}$ where $\widetilde{X}^i=(X^i_1,\dots,X^i_{k-1},X^i_{k+1}\dots,X^i_{n})$ if $k$-the column of $M_X$ is zero. Then
$$\big|\bigcap_{i=1}^m \ZE(\widetilde{X}^i)\big|=3^{z_{\widetilde{X}}}\Big(-(-2)^{m-1}+\sum_{\alpha\in \ZS_m}(-1)^{z(\alpha)}2^{z(\alpha)+|\mathrm{BSp}(\widetilde{X},\alpha)|}\Big)$$by induction hypothesis. Note that if $k \neq 1$ then $s=(s_1,\dots,s_n)$ is in  $ \overset{m}{\underset{i=1}{\bigcap}} \ZE(X)$ if and only if $(s_1,\dots,s_{k-1},s_{k+1},\dots,s_{n}) \in \overset{m}{\underset{i=1}{\bigcap}} \ZE(\widetilde{X}^i)\},$ and $s_{k} \in \{0,1,-1\}$. If $k=1$ then the elements in $ \overset{m}{\underset{i=1}{\bigcap}} \ZE(X)$ are of the form $(\varepsilon,s_2,\dots,s_n)$ where $\varepsilon\in \{0,1\}$ and $(s_2,\dots,s_n)\in\bigcap_{i=1}^m \ZE(\widetilde{X})$ or $\varepsilon=-1$ and $-(s_2,\dots,s_n)\in \overset{m}{\underset{i=1}{\bigcap}} \ZE(\widetilde{X})$. So the result follows for this case since $\mathrm{BSp}(X,\alpha)=\mathrm{BSp}(\widetilde{X}, \alpha)$.

Now, suppose that $M_X$ has no zero column. By reordering elements of $X$, we can assume that first column of $M_X$ is of the form $e_1+\dots+e_r$ for some $1\leq r\leq m$. Let $i_1,\dots, i_k$ be the set of all $i$'s for which $(X^j_2,\dots,X^j_n)\notin \ZS_{n-1}$. Clearly, $1\leq i_1,\cdots, i_k \leq r$. Then $\overline{X}=\{\overline{X^1},\dots, \overline{X^m}\}\subset \ZS_{n-1}$ where $\overline{X^j}=(X^j_2,\dots,X^j_n)$ if $j\neq i_t$ for some $1\leq t \leq k$ and $\overline{X^j}=-(X^j_2,\dots,X^j_n)$, otherwise. Then $s=(0,s_2,\dots,s_n)\in \ZS_n$ is in $\overset{m}{\underset{i=1}{\bigcap}} \ZE(X^i)$ if and only if $(s_2,\dots,s_{n-1})$ is in $\overset{m}{\underset{i=1}{\bigcap}}\overline{X^i}$. Therefore the size of the elements of this form in $\overset{m}{\underset{i=1}{\bigcap}} \ZE(X^i)$ is
$$-(-2)^{m-1}+\sum_{\alpha \in \ZS_m}(-1)^{z(\alpha)}2^{z(\alpha)+|\mathrm{BSp}(\overline{X},\alpha)|}$$ by induction. Note that for $\alpha=(\alpha_1,\cdots,\alpha_m)$ and $\alpha'=(\alpha'_1,\cdots,\alpha_m')$ in $\ZS_m$ with $\alpha'_j=-\alpha_j$ for $j=i_1,\dots,i_k$ and $\alpha'_j=\alpha_j$ otherwise, we have
$$|\mathrm{BSp}(\overline{X},\alpha)|=|\mathrm{BSp}(X,\alpha')| \ \mathrm{and} \ z(\alpha)=z(\alpha').$$ Moreover for $\alpha=(1,\dots,1,\alpha_{r+1},\dots, \alpha_m) \in \ZS_m$, $|\mathrm{BSp}(\overline{X},\alpha)|=|\mathrm{BSp}(X,\alpha)|-1$.

On the other hand, there are
$$\sum_{\alpha=(1,\dots,1,\alpha_{r+1},\dots,\alpha_m) \in \ZS_m}(-1)^{z(\alpha)}2^{z(\alpha)+|\mathrm{BSp}(X,\alpha)|-1}$$
many elements of the form $s=(1,s_2,\dots,s_n)\in \ZS_n$ is in $\bigcap_{i=1}^m \ZE(X^i)$ by previous lemma. Therefore the total number of elements in the intersection of the $\ZE(X^i)$'s is
$$-(-2)^{m-1}+\sum_{\alpha \in \ZS_m}(-1)^{z(\alpha)}2^{z(\alpha)+|\mathrm{BSp}(X,\alpha)|}$$ in this case.
\end{proof}

Now one can use the inclusion-exclusion principle, to find the number of elements eliminated by an arbitrary subset $X=\{X^1,\dots,X^t\}$ of $\ZS_n$.

\begin{prop}\label{prop:counting-set} The size of the set of elements eliminated by $X=\{X^1,\cdots,X^m\}$ is
$$\sum_{k=1}^m \sum_{1\leq i_1 < \dots < i_k \leq m}3^{z_{X(i_1,\dots,i_k)}}\Big(-2^{k-1}+ \sum_{\alpha \in S_k} (-1)^{z(\alpha)+k+1}2^{z(\alpha)+|\mathrm{BSp}(X(i_1,\dots,i_k), \alpha)|}\Big)$$ where $X(i_1,\dots,i_k)=\{X^{i_1},\dots,X^{i_k}\}.$
\end{prop}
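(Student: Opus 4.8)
The plan is to recognize that the whole statement is a direct assembly of Theorem \ref{thm:counting-intersctions} with the inclusion--exclusion principle, exactly as the sentence preceding the proposition suggests. The first step is to rewrite the set of elements eliminated by $X=\{X^1,\dots,X^m\}$ as a union. By the definition of $\ZE$, a tuple $v\in\ZS_n$ is eliminated by $X$ precisely when it is eliminated by at least one $X^i$, so
$$\ZE(X)=\bigcup_{i=1}^m\ZE(X^i).$$
This reduces the computation of $|\ZE(X)|$ to counting the size of a union of $m$ sets, which is the canonical setting for inclusion--exclusion.

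Next I would apply inclusion--exclusion to this union:
$$|\ZE(X)|=\Big|\bigcup_{i=1}^m\ZE(X^i)\Big|=\sum_{k=1}^m(-1)^{k+1}\sum_{1\leq i_1<\dots<i_k\leq m}\Big|\bigcap_{t=1}^k\ZE(X^{i_t})\Big|.$$
Each inner intersection is exactly the quantity evaluated in Theorem \ref{thm:counting-intersctions}, applied to the $k$-element subfamily $X(i_1,\dots,i_k)=\{X^{i_1},\dots,X^{i_k}\}\subseteq\ZS_n$. Substituting that formula yields
$$\Big|\bigcap_{t=1}^k\ZE(X^{i_t})\Big|=3^{z_{X(i_1,\dots,i_k)}}\Big(-(-2)^{k-1}+\sum_{\alpha\in\ZS_k}(-1)^{z(\alpha)}2^{z(\alpha)+|\mathrm{BSp}(X(i_1,\dots,i_k),\alpha)|}\Big).$$

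The only remaining work is to absorb the alternating factor $(-1)^{k+1}$ from inclusion--exclusion into the two summands inside the bracket. For the first summand, writing $(-2)^{k-1}=(-1)^{k-1}2^{k-1}$ gives $(-1)^{k+1}\bigl(-(-2)^{k-1}\bigr)=-(-1)^{2k}2^{k-1}=-2^{k-1}$, which collapses to $-2^{k-1}$ independently of the parity of $k$. For the second summand, $(-1)^{k+1}(-1)^{z(\alpha)}=(-1)^{z(\alpha)+k+1}$. After these replacements the bracketed expression becomes precisely $-2^{k-1}+\sum_{\alpha\in\ZS_k}(-1)^{z(\alpha)+k+1}2^{z(\alpha)+|\mathrm{BSp}(X(i_1,\dots,i_k),\alpha)|}$, which is the summand appearing in the statement (reading the $S_k$ there as $\ZS_k$).

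I expect no genuine conceptual obstacle: the argument is purely a substitution of Theorem \ref{thm:counting-intersctions} into the inclusion--exclusion expansion. The single point demanding care is the sign bookkeeping described above, namely verifying that the parity factor $(-1)^{k+1}$ combines correctly with the $-(-2)^{k-1}$ term and with $(-1)^{z(\alpha)}$; once that is checked, the formula assembles term by term.
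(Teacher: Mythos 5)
Your proposal is correct and is exactly the argument the paper intends: the paper offers no written proof beyond the remark that one applies inclusion--exclusion to $\ZE(X)=\bigcup_{i}\ZE(X^i)$ together with Theorem \ref{thm:counting-intersctions}, and your sign bookkeeping (collapsing $(-1)^{k+1}\bigl(-(-2)^{k-1}\bigr)$ to $-2^{k-1}$ and absorbing $(-1)^{k+1}$ into $(-1)^{z(\alpha)}$) correctly fills in the details, including the observation that the $S_k$ in the statement should be read as $\ZS_k$.
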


To make the calculations easier, we introduce two column operations on the set of matrices of the form $M_X$. First one is the action of the symmetric group $S_n$ of order $n$. We define the action of $\sigma \in S_n$ on $\ZS_n$ by
$$ \sigma(x)=\left\{
  \begin{array}{ll}
    (x_{\sigma(1)},\cdots,x_{\sigma(n)}), & \hbox{if $(x_{\sigma(1)},\cdots,x_{\sigma(n)}) \in \ZS_n$;} \\
    (-x_{\sigma(1)},\cdots,-x_{\sigma(n)}), & \hbox{otherwise.}
  \end{array}
\right.$$ Here $(x_{\sigma(1)},\cdots,x_{\sigma(n)}) \in \ZS_n$ means that the first non-zero term of $(x_{\sigma(1)},\cdots,x_{\sigma(n)})$ is $1$. This induces an action of $\sigma$ on $\{M_X| \ X \subseteq \ZS_n\}$ by sending $M_X$ to $M_{\sigma(X)}$ where $\sigma(X)=\{\sigma(X^1),\dots,\sigma(X^n)\}$. The second operation is multiplying a column of $M_{X}$ by $-1$. We also need to be careful here. For $x=(x_1,\dots,x_n) \in \ZS_n$, let
$$x_{j}^{-}=\left\{
               \begin{array}{ll}
                 (x_1,\dots,x_{j-1},-x_j,x_{j+1},\dots,x_n), & \hbox{if $(x_1,\dots,x_{j-1},-x_j,x_{j+1},\dots,x_n) \in \ZS_n$;} \\
                 -(x_1,\dots,x_{j-1},-x_j,x_{j+1},\dots,x_n), & \hbox{otherwise.}
               \end{array}
             \right.
$$Here what we mean by the matrix obtained by multiplying $j$-th column of $M_{X}$ by $-1$ is the matrix $M_{X[j]}$ where $X[j]=\{(X^1)_{j}^{-},\dots,(X^m)_{j}^{-}\}$. Clearly these operations preserves the cardinality of arbitrary intersections of eliminated sets.

\begin{example} Let $X=\{x, y \}\subset \ZS_n$. By applying column operations, we can assume that $M_X$ has 5 types of columns namely
$$\left(
    \begin{array}{c}
      1 \\
      1 \\
    \end{array}
  \right), \ \left(
    \begin{array}{c}
      1 \\
      -1 \\
    \end{array}
  \right), \ \left(
    \begin{array}{c}
      1 \\
      0 \\
    \end{array}
  \right), \ \left(
    \begin{array}{c}
      0 \\
      1 \\
    \end{array}
  \right), \ \mathrm{and} \ \left(
    \begin{array}{c}
      0 \\
      0 \\
    \end{array}
  \right).
$$ Suppose that $M_X$ has $a_1$-many columns of first type, $a_2$-many columns of second type, $b_1$-many columns of third type, $b_2$-many columns of forth type and $c$-many columns of last type. This means that $x$ has $(b_2+c)$-many zeros and $y$ has $(b_1+c)$-many zeros. By above Proposition, we have
\begin{eqnarray*}|\ZE(X)|=
3^c\Big(3^{b_1}2^{a_1+a_2+b_2}+3^{b_2}2^{a_1+a_2+b_1}-2^{b_1+b_2}(2^{a_1}+2^{a_2})-3^{b_1}-3^{b_2}+2^{b_1+1}+2^{b_2+1}-2\Big)\end{eqnarray*}
\end{example}

\bibliographystyle{amsplain}
\bibliography{CSRBib}

\end{document}